\documentclass[pdflatex,sn-mathphys-num]{sn-jnl}

\usepackage[T1]{fontenc}%
\usepackage{lmodern}%
\usepackage{graphicx}%
\usepackage{multirow}%
\usepackage{amsmath,amssymb,amsfonts}%
\usepackage{amsthm}%
\usepackage{mathrsfs}%
\usepackage[title]{appendix}%
\usepackage{xcolor}%
\usepackage{textcomp}%
\DeclareUnicodeCharacter{2131}{\ensuremath{\mathcal{F}}}
\DeclareUnicodeCharacter{2102}{\ensuremath{\mathbb{C}}}
\DeclareUnicodeCharacter{2500}{-}
\DeclareUnicodeCharacter{2550}{=}
\DeclareUnicodeCharacter{00AC}{\ensuremath{\lnot}}
\DeclareUnicodeCharacter{2286}{\ensuremath{\subseteq}}
\DeclareUnicodeCharacter{22A5}{\ensuremath{\bot}}
\DeclareUnicodeCharacter{0141}{\L{}}
\DeclareUnicodeCharacter{0142}{\l{}}
\DeclareUnicodeCharacter{0393}{\ensuremath{\Gamma}}
\DeclareUnicodeCharacter{0394}{\ensuremath{\Delta}}
\DeclareUnicodeCharacter{2014}{\textemdash}
\DeclareUnicodeCharacter{2192}{\ensuremath{\to}}
\DeclareUnicodeCharacter{22A2}{\ensuremath{\vdash}}
\DeclareUnicodeCharacter{22AC}{\ensuremath{\nvdash}}
\DeclareUnicodeCharacter{1D40C}{\ensuremath{\mathbf{M}}}
\usepackage{manyfoot}%
\usepackage{booktabs}%

\usepackage{bussproofs}

\providecommand{\nvdash}{\not\vdash}

\theoremstyle{thmstyleone}%
\newtheorem{theorem}{Theorem}%
\newtheorem{proposition}[theorem]{Proposition}%

\theoremstyle{thmstyletwo}%
\newtheorem{remark}{Remark}%

\theoremstyle{thmstylethree}%

\usepackage{tcolorbox}
\tcbuselibrary{breakable}
\usepackage{fancyvrb}

\newtcolorbox{coqbox}{
  breakable,
  colback=gray!10,
  colframe=gray!40,
  boxrule=0.4pt,
  arc=1pt,
}

\makeatletter
\renewcommand{\verbatim@font}{\ttfamily\fontsize{8bp}{9.2bp}\selectfont}
\makeatother
\begin{document}

\title[]{A Proof in Coq that Core Logic is
  \textit{not} Paraconsistent}

\author*[1]{\fnm{Joseph} \sur{Vidal-Rosset}}\email{joseph.vidal-rosset@univ-lorraine.fr}

\affil*[1]{\orgdiv{Département de philosophie, Archives Poincaré UMR 7117 CNRS},
  \orgname{Université de Lorraine},
  \orgaddress{\city{Nancy}, \country{France}}}

\abstract{Tennant claims that his Core logic $\mathbb{C}$ is
  \emph{paraconsistent}. It means that the sequent of the First
  Lewis Paradox, i.e.
  $$\lnot A, A \vdash B$$
  is declared false, and its corresponding antisequent, called
  `Claim~1', i.e.
  $$\lnot A, A \nvdash B$$
  true, as in minimal logic $\mathbf{M}$. This paper proves that
  Claim~1 entails a contradiction in $\mathbb{C}$, so that, to
  preserve consistency, the Core logician must reject the claim
  that his system is paraconsistent. The proof is purely logical,
  in four steps within a five-rule fragment $\mathcal{F}$ of
  $\mathbb{C}$ and its refutation system in the sense of
  \L{}ukasiewicz and Goranko; the Appendix certifies every step in
  Coq --- with no axiom assumed and every commitment displayed as a
  named hypothesis --- and the same certification is replayed
  independently in Lean~4.}

\keywords{Core logic, paraconsistency, minimal logic, sequent
  calculus, refutation system, rejection, invertibility, certified
  proof, Coq, Lean~4}

\maketitle

\section{Introduction}\label{sec:intro}

In intuitionistic logic (for short \textbf{I}), the famous Lewis
Paradox is derivable thanks to the rule of \emph{Weakening on the
right}:
\begin{prooftree}
\AxiomC{}
\RightLabel{\tiny{$Ax.$}}
\UnaryInfC{$A \vdash A$}
\RightLabel{\tiny{$L\lnot$}}
\UnaryInfC{$\lnot A, A \vdash $}
\RightLabel{\tiny{$R~Wk.$}}
\UnaryInfC{$\lnot A, A \vdash B$}
\end{prooftree}
In \emph{minimal logic} (for short \textbf{M}, a \emph{subsystem} of
\textbf{I}), the rule \emph{Weakening on the left} allows the
derivability of the negated-conclusion form of the First Lewis
Paradox:
\begin{prooftree}
\AxiomC{}
\RightLabel{\tiny{$Ax.$}}
\UnaryInfC{$A \vdash A$}
\RightLabel{\tiny{$L\lnot$}}
\UnaryInfC{$\lnot A, A \vdash $}
\RightLabel{\tiny{$L~Wk.$}}
\UnaryInfC{$B, \lnot A, A \vdash $}
\RightLabel{\tiny{$R\lnot$}}
\UnaryInfC{$\lnot A, A \vdash \lnot B$}
\end{prooftree}

In the system that Tennant invented at the end of the 1980s
\cite{tennant1987}, formerly called `Intuitionistic Relevant Logic'
and now called `Core Logic' (for short, $\mathbb{C}$)
\cite{tennant2017}, \emph{there is no Weakening}, neither on the
right nor on the left, and the relevance of $\mathbb{C}$ is claimed
to operate \emph{only at the level of the turnstile}
\cite[p.~41]{tennant2017}. Indeed, any intuitionistic theorem is
also a Core theorem \cite[p.~43]{tennant2017}:
\begin{quote}
  Core Logic matches Intuitionistic Logic on logical
  \emph{theorems}, on inconsistencies, and on deducibility from
  consistent sets of premises.
\end{quote}
But
\begin{equation}
\lnot A, A \nvdash B \tag{Claim 1}\label{claim1}
\end{equation}
and
\begin{equation}
\lnot A, A \nvdash \lnot B \tag{Claim 2}\label{claim2}
\end{equation}
are \emph{basic antisequents} in $\mathbb{C}$: the conclusions of
the derivations above are claimed by Tennant \emph{always false} in
his system.

\eqref{claim1} is why Tennant declares $\mathbb{C}$ to be, like
\textbf{M}, \emph{paraconsistent} \cite[p.~156]{tennant2017}. By
contrast, \eqref{claim2} is why he denies that \textbf{M} is a
subsystem of $\mathbb{C}$, describing instead an overlap between
\textbf{M} and $\mathbb{C}$ \cite[p.~35]{tennant2017}:

\begin{itemize}
\item $\mathbb{C} \cap \mathbf{M} \neq \emptyset$, i.e., \textbf{M}
  and $\mathbb{C}$ share the proofs of some sequents like, for
  example,
  \begin{equation}
    \label{eq:1}
    A \to B, A \vdash B
  \end{equation}
\item $\mathbb{C} \not\subseteq \mathbf{M}$, because the following
  sequent is provable in $\mathbb{C}$, but not in \textbf{M}:
  \begin{equation}
    \label{eq:2}
    \lnot A \vdash A \to B
  \end{equation}
\item $\mathbf{M} \not\subseteq \mathbb{C}$, because of
  \eqref{claim2}.
\end{itemize}

Section~\ref{sec:theorem} states the theorem and proves it in four
purely logical steps: the fragment and its two readings
(\S\ref{sec:fragment}), the derivability of DNS.1 and DNS.2
(\S\ref{sec:rules}), the invertibility of DNS.1
(\S\ref{sec:invertible}), and the refutation rule
$\overline{DNS.1}$, contrapositive of that invertibility --- which
is also where the theory of \emph{refutation systems}, the
deductive systems for rejected statements initiated by
\L{}ukasiewicz \cite{lukasiewicz1957} and developed for sequents by
Tiomkin \cite{tiomkin1988} and Goranko \cite{goranko1994}, enters
the argument (\S\ref{sec:antidns}) --- before the contradiction
concludes (\S\ref{sec:contradiction}). Section~\ref{sec:corollary}
extends the result to \eqref{claim2}, and
Section~\ref{sec:scope-reason} assesses its scope and its reason.
No proof assistant is needed to read any of this. The complete Coq
certification --- twelve statements, no axiom, every commitment
displayed as a named hypothesis --- is given in
Appendix~\ref{app:coq}, block by block, each block tied to the step
of the proof it certifies, and replayed independently in Lean~4
(Appendix~\ref{secA1}).

\section{Theorem}\label{sec:theorem}

\begin{theorem}\label{thm:main}
The theory according to which $\mathbb{C}$ is paraconsistent is
false, because \eqref{claim1} entails a contradiction in
$\mathbb{C}$. Therefore, to preserve consistency, the Core logician
must reject \eqref{claim1}, that is to say the claim that his
system is paraconsistent.
\end{theorem}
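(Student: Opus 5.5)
The plan is to derive, from \eqref{claim1}, the rejection of a sequent that $\mathbb{C}$ itself proves, namely \eqref{eq:2}, $\lnot A \vdash A \to B$. The argument lives entirely inside a small fragment $\mathcal{F}$ of $\mathbb{C}$ with five rules: axiom, $L\lnot$, $R\lnot$, the Core version of $R\!\to$, and $L\!\to$ in Tennant's modus ponens form. I will read $\mathcal{F}$ in two ways:
\begin{itemize}
\item as a \emph{deductive} system, which generates the sequents asserted by $\mathbb{C}$;
\item as a \emph{refutation} system in the sense of \L{}ukasiewicz and Goranko, which generates antisequents $\Gamma \nvdash \varphi$ from basic antisequents by anti-rules.
\end{itemize}
The single consistency requirement is that no sequent is both derived and refuted.

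First, I will isolate the two rules I call DNS.1 and DNS.2 and check that each is derivable in $\mathcal{F}$.
\begin{itemize}
\item DNS.1 is the standard introduction of the conditional: from $\Gamma, A \vdash B$ infer $\Gamma \vdash A \to B$.
\item DNS.2 is Tennant's special clause: from $\Gamma, A \vdash$ (empty succedent) infer $\Gamma \vdash A \to B$.
\end{itemize}
DNS.2 already gives \eqref{eq:2} directly: start from $A \vdash A$, apply $L\lnot$ to obtain $\lnot A, A \vdash$, and then apply DNS.2.

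Second, I will prove that DNS.1 is invertible in $\mathcal{F}$: whenever $\Gamma \vdash A \to B$ is derivable, so is $\Gamma, A \vdash B$. The route is to combine the derivation of $\Gamma \vdash A\to B$ with the derivable sequent $A \to B, A \vdash B$ from \eqref{eq:1}.

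Third, I will take the contrapositive of this invertibility, which gives the anti-rule
\[
\overline{DNS.1}:\quad \text{from } \Gamma, A \nvdash B \text{ infer } \Gamma \nvdash A \to B .
\]
This rule is admissible in the refutation reading exactly when DNS.1 is invertible in the deductive reading. Instantiating $\Gamma := \lnot A$ and feeding in \eqref{claim1} as a basic antisequent yields $\lnot A \nvdash A \to B$. This contradicts the derivability of \eqref{eq:2} from the first step. Hence the Core logician who wants consistency must give up \eqref{claim1}.

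The main obstacle is the invertibility step. In $\mathbb{C}$, cut is not unrestricted: Tennant's cut only guarantees $\Gamma' \vdash B$ or $\Gamma' \vdash$ for some $\Gamma' \subseteq \Gamma \cup \{A\}$. This is precisely how he blocks $\lnot A, A \vdash B$, since the cut collapses to $\lnot A, A \vdash$. So I cannot simply compose the derivation of $\Gamma \vdash A\to B$ with \eqref{eq:1}.

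My first attempt will be a direct induction on derivations in $\mathcal{F}$. I will examine the last rule of a derivation of $\Gamma \vdash A \to B$. If that rule is DNS.1, the premise is already $\Gamma, A \vdash B$. If it is DNS.2, I must argue that the refutation reading of $\mathcal{F}$ treats the two readings of the conclusion uniformly, so that invertibility is stated for the fragment as a whole. This is the delicate point, and it is the one I would certify first in Coq, with the commitment made explicit as a named hypothesis. The remaining steps are routine: a short derivation, a contraposition, and one instantiation.
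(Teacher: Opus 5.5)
Your route is the paper's own, with the outer layer removed. The paper's DNS.1 and DNS.2 are your two rules (ordinary $R\to$ and $R\to_{\mathbb{C}}$) each followed by $L\to$ against $B \vdash B$. Its syntactic invertibility argument for DNS.1 reduces to the invertibility of $R\to$. It collides at $\lnot A, (A\to B)\to B \vdash B$, where you collide at \eqref{eq:2}.

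The genuine gap is your second step, and finding the right induction will not close it. The claim ``whenever $\Gamma \vdash A\to B$ is derivable in $\mathcal{F}$, so is $\Gamma, A \vdash B$'' is false in the Core reading, and the counterexample is your decisive instance. $\lnot A \vdash A\to B$ is derivable by DNS.2, whereas $\lnot A, A \vdash B$ is not. For distinct atoms no rule of the fragment can conclude it: $Ax$ would need $B$ in the antecedent, $L\to$ would need an implication there, and every other rule has the wrong succedent. The paper certifies exactly this in Block~F. So the DNS.2 case of your induction cannot be discharged, and no argument that the two readings are treated ``uniformly'' is available. Your own remark about cut already shows this: composing $\lnot A \vdash A\to B$ with \eqref{eq:1} under Tennant's cut yields only $\lnot A, A \vdash$ (compare Section~\ref{sec:reason}). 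That collapse is a counterexample, not a technical obstacle. The $L\to$ case, which your sketch omits, is routine: invert the right premiss and reapply $L\to$.

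What can be proved is invertibility in the minimal reading $\mathcal{F}_{\mathbf{M}}$, where the $R\to_{\mathbb{C}}$ case does not arise. The paper excludes that case by typing in Coq and, informally, by Tennant's consistency proviso (Remark~\ref{rema1}). It does not prove the transfer to $\mathbb{C}$ either. Its final theorem is a pure conditional whose second named hypothesis, \texttt{anti\_DNS1\_rule\_for\_}$\mathbb{C}$, is the anti-rule for $\mathbb{C}$'s reading. That hypothesis is defended not by a derivation but by an argument that Tennant is committed to it: the overlap with $\mathbf{M}$, his acknowledgment of the invertibility of $R\to$, and Goranko's correctness discipline. Block~F certifies that this very hypothesis fails in $\mathcal{F}_{\mathbb{C}}$.

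Your streamlined version makes the status of that hypothesis especially plain. At $\Gamma = \lnot A$ it says ``if $\lnot A \vdash A\to B$ then $\lnot A, A \vdash B$''. Since \eqref{eq:2} is derivable, this just is the denial of \eqref{claim1}. So ``Claim~1 together with the hypothesis is contradictory'' cannot by itself tell against Claim~1 rather than against the hypothesis.

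To reach what the paper actually establishes, make three changes:
\begin{itemize}
\item replace ``I will prove DNS.1 invertible in $\mathcal{F}$'' by invertibility in $\mathcal{F}_{\mathbf{M}}$;
\item state the theorem as a conditional on the displayed commitment;
\item supply the argument that Tennant must accept that commitment.
\end{itemize}
The step you planned to certify first can enter the certification only as a hypothesis, never as a theorem.
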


\noindent\emph{Proof in four steps, in $\mathcal{F}$ and in its
refutation system in the sense of \L{}ukasiewicz and Goranko.} The
reasoning below is purely logical and can be read without any proof
assistant; Appendix~\ref{app:coq} certifies each step in Coq,
block by block, and the correspondence between steps and certified
statements is recalled at the head of each block.

\subsection{Fragment $\mathcal{F}$ of $\mathbb{C}$}\label{sec:fragment}

\begin{table}[h]
\caption{Fragment $\mathcal{F}$ of $\mathbb{C}$}\label{tab:fragment}
\begin{tabular}{@{}ccc@{}}
\toprule
 &
\begin{minipage}{0.30\textwidth}\centering
\begin{prooftree}
\AxiomC{}
\RightLabel{\tiny{\textit{Ax.}}}
\UnaryInfC{$A \vdash A$}
\end{prooftree}
\end{minipage}
& \\[1ex]
\begin{minipage}{0.30\textwidth}\centering
\begin{prooftree}
\AxiomC{$\Delta \vdash A$}
\RightLabel{\tiny{$L\lnot$}}
\UnaryInfC{$\lnot A, \Delta \vdash$}
\end{prooftree}
\end{minipage}
& &
\begin{minipage}{0.30\textwidth}\centering
\begin{prooftree}
\AxiomC{$\Delta \vdash B$}
\RightLabel{\tiny{$R\to$}}
\UnaryInfC{$\Delta \backslash \{A\} \vdash A \to B$}
\end{prooftree}
\end{minipage}
  \\[1ex]
  \\
\begin{minipage}{0.30\textwidth}\centering
\begin{prooftree}
\AxiomC{$\Delta \vdash A$}
\AxiomC{$B, \Gamma \vdash C$}
\RightLabel{\tiny{$L\to$}}
\BinaryInfC{$A \to B, \Delta, \Gamma \vdash C$}
\end{prooftree}
\end{minipage}
& &
\begin{minipage}{0.30\textwidth}\centering
\begin{prooftree}
\AxiomC{$A, \Delta \vdash$}
\RightLabel{\tiny{$R\to_{\mathbb{C}}$}}
\UnaryInfC{$\Delta \vdash A \to B$}
\end{prooftree}
\end{minipage}
\\
\botrule
\end{tabular}
\end{table}

Table~\ref{tab:fragment} reproduces a \emph{fragment} $\mathcal{F}$
of $\mathbb{C}$, with rules drawn from Tennant's exposition of
\textbf{M}, \textbf{I} and $\mathbb{C}$
\cite[pp.~159--163]{tennant2017}. The label $R\to_{\mathbb{C}}$ is
mine, not Tennant's; I use it because it is the only
Core-specific rule in $\mathcal{F}$, the other four being
\emph{exactly} the same as in \textbf{M}.

\begin{remark}\label{rema1}
In this syntax, \[\Delta \vdash B\] presupposes that $\Delta$ is
consistent, otherwise \[\Delta \vdash\] must be written.
\end{remark}

\begin{remark}\label{rema2}
In textbooks, rule $R\to$ is usually written as
\begin{prooftree}
\AxiomC{$A, \Delta \vdash B$}
\RightLabel{\tiny{$R\to$}}
\UnaryInfC{$\Delta \vdash A \to B$}
\end{prooftree}
Tennant's preferred format reproduced in Table~\ref{tab:fragment}
for $\mathcal{F}$ means only that discharging an assumption to
derive the conclusion is \emph{optional} with this rule, because
the context is supposed to be consistent. In other words, with
consistent contexts, the rule of Weakening on the left is
\emph{admissible} in $\mathbb{C}$, which explains why
\begin{equation}
  \label{eq:belnap-paradox}
  A \vdash B \to A
\end{equation}
is Core \emph{provable} \cite[p.~35]{tennant2017} as follows
\begin{prooftree}
  \AxiomC{}
  \RightLabel{\tiny{$Ax.$}}
  \UnaryInfC{$B \vdash B$}
  \RightLabel{\tiny{$R\to$}}
  \UnaryInfC{$B \vdash A \to B$}
\end{prooftree}
even if it is  usually considered a paradox from the  point of view of
Anderson         and          Belnap's         relevance         logic
\cite{anderson_belnap_1975}.    Again,   $\mathbb{C}$    is   relevant
\emph{only at  the level of  the turnstile}, hence  \eqref{claim1} and
\eqref{claim2}.
\end{remark}

In what follows, $\mathcal{F}_{\mathbf{M}}$ denotes the fragment
under its \emph{minimal} reading (the four shared rules \emph{Ax.},
$L\lnot$, $R\to$, $L\to$ alone) and $\mathcal{F}_{\mathbb{C}}$ the
same fragment under $\mathbb{C}$'s reading, which adds
$R\to_{\mathbb{C}}$. An antisequent $\Gamma \nvdash C$ says that no
derivation concludes $\Gamma \vdash C$. Following Tennant's own
convention, contexts are treated as \emph{sets of available
assumptions}: the judgment $\Gamma \vdash C$ says that some
subsequent $\Delta \vdash C$, with $\Delta \subseteq \Gamma$, is
Core-provable --- which is why extending the pool of available
assumptions is deductively inert, whereas composing two derivations
(Cut) is a different matter altogether, examined in
Section~\ref{sec:reason}. (Appendix~\ref{app:coq}, Block~A,
encodes the language and the calculus: contexts are treated
extensionally, the left rules locating their principal formula by
membership, and the Core-specific rule is restricted by typing to
$\mathbb{C}$'s reading.)

\noindent\emph{Reply to an anticipated objection.} It might be
objected that the argument of this paper smuggles into $\mathbb{C}$
results which are valid only in minimal logic. The objection misses
the point. The whole argument is conducted inside fragment
$\mathcal{F}$ of $\mathbb{C}$, which by construction shares with
$\mathbf{M}$ the four rules \emph{Ax.}, $L\lnot$, $R\to$, $L\to$:
every derivation in $\mathcal{F}_{\mathbf{M}}$ is, by definition of
a fragment system, also a Core derivation in $\mathcal{F}$ ---
DNS.1 is certified \emph{uniformly} in the two readings for this
very reason (Appendix, Block~B). Nothing is imported from outside
$\mathcal{F}$; the only Core-specific rule, $R\to_{\mathbb{C}}$,
enters the argument exactly where DNS.2 needs it.

\subsection{DNS.1 and DNS.2 are derivable}\label{sec:rules}

`DNS' being an acronym for `Double Negation \`a la Slaney'
\cite{Slaney94minlog}, I call `DNS.1' and `DNS.2' the rules listed
in Table~\ref{tab:dns} below, together with their respective
derivations in $\mathcal{F}$, which prove that they are derivable
in $\mathbb{C}$. (By definition, a rule is derivable in a given
system of rules if its conclusion is derivable from its premisses
within the system \cite[p.~47]{vonplato2013}.)

\begin{table}[h]
\caption{DNS rules and their respective derivations in $\mathcal{F}$}\label{tab:dns}
\begin{tabular}{@{}cc@{}}
\toprule
\begin{minipage}{0.45\textwidth}\centering
\begin{prooftree}
\AxiomC{$A, \Delta \vdash B$}
\RightLabel{\tiny{DNS.1}}
\UnaryInfC{$(A \to B) \to B, \Delta \vdash B$}
\end{prooftree}
\end{minipage}
&
\begin{minipage}{0.45\textwidth}\centering
\begin{prooftree}
\AxiomC{$A, \Delta \vdash$}
\RightLabel{\tiny{DNS.2}}
\UnaryInfC{$(A \to B) \to B, \Delta \vdash B$}
\end{prooftree}
\end{minipage}
  \\[1ex]
  \\
\begin{minipage}{0.45\textwidth}\centering
\begin{prooftree}
\AxiomC{$A, \Delta \vdash B$}
\RightLabel{\tiny{$R\to$}}
\UnaryInfC{$\Delta \vdash A \to B$}
\AxiomC{}
\RightLabel{\tiny{$Ax$}}
\UnaryInfC{$B \vdash B$}
\RightLabel{\tiny{$L\to$}}
\BinaryInfC{$(A \to B) \to B, \Delta \vdash B$}
\end{prooftree}
\end{minipage}
&
\begin{minipage}{0.45\textwidth}\centering
\begin{prooftree}
\AxiomC{$A, \Delta \vdash$}
\RightLabel{\tiny{$R\to_{\mathbb{C}}$}}
\UnaryInfC{$\Delta \vdash A \to B$}
\AxiomC{}
\RightLabel{\tiny{$Ax$}}
\UnaryInfC{$B \vdash B$}
\RightLabel{\tiny{$L\to$}}
\BinaryInfC{$(A \to B) \to B, \Delta \vdash B$}
\end{prooftree}
\end{minipage}
\\
\botrule
\end{tabular}
\end{table}

\noindent (Appendix, Block~B: theorem \texttt{DNS1\_in\_ℱ}, proved
\emph{uniformly} in the two readings of $\mathcal{F}$ since its
derivation uses only the four shared rules, and theorem
\texttt{DNS2\_instantiated}, fed by the inconsistency sequent
\texttt{absurdity\_core} and using $R\to_{\mathbb{C}}$. The only
structural fact involved, \texttt{context\_monotonicity}, is not
Tennant's Weakening --- no such rule exists in $\mathbb{C}$, and
none is added --- and it is neither Cut nor transitivity: no two
derivations are composed, no cut formula exists, no formula is
eliminated. It rebuilds \emph{one} derivation, unchanged and
height-preservingly, in a larger pool of available assumptions ---
the very junction of contexts $(\Delta, \Gamma)$ that Tennant's own
rule $L\to$ performs notationally. Safety runs in both directions:
the monotone encoding derives \emph{more} sequents than Tennant's
strict reading, so every underivability proved below holds
\emph{a fortiori} of the strict reading; and the derivable witness
of the final collision uses every formula of its context, so no
dilution slack is exploited on the positive side.)

\subsection{DNS.1 is invertible}\label{sec:invertible}

A rule is \emph{invertible} when, from the derivability of a
sequent of the form of its conclusion, the derivability of all its
premisses follows \cite[p.~93]{platonegri2011}. DNS.1 is provably
\emph{invertible}: outright in $\mathcal{F}_{\mathbf{M}}$, as
certified below, and in $\mathcal{F}$ as a whole under Tennant's own
consistency proviso for contexts (Remark~\ref{rema1}).

Reading the syntactic  derivation of DNS.1 (i.e.,  the left derivation
in Table~\ref{tab:dns}) from root to top,  the last rule to derive the
premiss of DNS.1 is $R\to$. Therefore,  if $R\to$ is invertible, so is
DNS.1. The following proof of the  invertibility of $R\to$ is based on
the  principle of  structural induction  on derivations,  used by  von
Plato \cite[pp.~68--70]{vonplato2013} to prove  the inversion lemma in
which  the invertibility  of  $R\to$ is  established.  The reader  not
concerned  with  details  may  skip   this  section:  because  in  his
discussion of the  converse of the Deduction  Theorem, Tennant himself
acknowledges         the        invertibility         of        $R\to$
\cite[p.~46]{tennant2017}.  The  invertibility  of DNS.1  is  a  basic
logical  truth:  provably valid  in  \textbf{M},  DNS.1 containing  no
negation, it cannot be invalid in $\mathbb{C}$.

\subsubsection{Syntactic Proof}

The proof proceeds by case analysis on the rule at the root of
$\pi$ --- that is, the rule whose conclusion is the endsequent
itself. We assume throughout that $\Delta$ is consistent; the case
where the root rule is $R\to_{\mathbb{C}}$ is therefore excluded by
Tennant's own caveat (Remark~\ref{rema1} above).

Two preliminary points are essential to the analysis:

\begin{enumerate}
\item[(i)] \label{point_i} The only initial sequents of
  $\mathbb{C}$ are of the form $A \vdash A$ where $A$ is atomic,
  since \emph{Weakening on the left} is not a rule of the sequent
  calculus for $\mathbb{C}$.

\item[(ii)] \label{point_ii} The form of Tennant's rule $R\to$ is
\begin{prooftree}
\AxiomC{$\Delta \vdash B$}
\RightLabel{\tiny{$R\to$}}
\UnaryInfC{$\Delta \backslash \{A\} \vdash A \to B$}
\end{prooftree}
where the  discharge of  $A$ is optional:  it may  be \emph{effective}
($A \in \Delta$ and $A$ is removed from the context in the conclusion)
or \emph{vacuous} ($A  \notin \Delta$, and the  rule simply introduces
$A  \to  B$  without  affecting  the context).  The  vacuous  case  is
precisely what licenses the derivation of \eqref{eq:2} in $\mathbb{C}$
as recalled  in Remark~\ref{rema2} above. This  unusual formulation of
$R\to$  --- where  the  antecedent of  the  discharged implication  is
\emph{subtracted} from the context rather than added to it --- is what
makes invertibility almost trivial:  it essentially amounts to putting
the antecedent of the conditional back into context $\Delta$.
\end{enumerate}

We can now state the inversion lemma for $R\to$ in $\mathcal{F}$:
\[\Delta \backslash \{A\} \vdash A \to B
~~\Rightarrow~~
A, \Delta \backslash \{A\} \vdash B\]
whether $A$ was effectively discharged from $\Delta$, or vacuously
introduced by $R\to$.

\begin{itemize}
\item \emph{Base case: $\pi$ reduces to the application of $R\to$
    to an atomic axiom.} The shortest derivation of a sequent of
  the form $\Delta \backslash \{A\} \vdash A \to B$ in $\mathbb{C}$
  is
  \begin{prooftree}
  \AxiomC{}
  \RightLabel{\tiny{\textit{Ax.}}}
  \UnaryInfC{$A \vdash A$}
  \RightLabel{\tiny{$R\to$}}
  \UnaryInfC{$\vdash A \to A$}
  \end{prooftree}
  obtained by effective discharge of $A$ from the atomic axiom $A
  \vdash A$.

  In this case, $B = A$, $\Delta = \{A\}$, $\Delta \backslash \{A\}
  = \emptyset$. The inversion of $R\to$ recovers the initial
  sequent $A \vdash A$ itself.

  \emph{Illustrative example.} A slightly longer derivation,
  instructive for the non-Core-logic reader, produces the identity
  sequent on an implication. In $\mathbb{C}$, $A \to B \vdash A \to
  B$ is \emph{not} an axiom (axioms are atomic); it is derived
  from atomic axioms by:
  \begin{prooftree}
  \AxiomC{}
  \RightLabel{\tiny{\textit{Ax.}}}
  \UnaryInfC{$A \vdash A$}
  \AxiomC{}
  \RightLabel{\tiny{\textit{Ax.}}}
  \UnaryInfC{$B \vdash B$}
  \RightLabel{\tiny{$L\to$}}
  \BinaryInfC{$A, A \to B \vdash B$}
  \RightLabel{\tiny{$R\to$}}
  \UnaryInfC{$A \to B \vdash A \to B$}
  \end{prooftree}
  The sub-derivation up to $L\to$ establishes $A, A \to B \vdash
  B$, which is the inversion of the endsequent. The final $R\to$
  step, with effective discharge of $A$, recovers the endsequent
  $A \to B \vdash A \to B$.

\item \emph{Inductive case: the root rule is $L\to$.} In this
  case, $A \to B$ is propagated unchanged from the right premiss
  of $L\to$ to the conclusion. The derivation $\pi$ ends with
  \begin{prooftree}
  \AxiomC{$\vdots$}
  \UnaryInfC{$\Delta_1 \vdash C$}
  \AxiomC{$\vdots$}
  \UnaryInfC{$D, \Delta_2 \vdash A \to B$}
  \RightLabel{\tiny{$L\to$}}
  \BinaryInfC{$C \to D, \Delta_1, \Delta_2 \vdash A \to B$}
  \end{prooftree}
  where $\Delta \backslash \{A\} = C \to D, \Delta_1, \Delta_2$.

  The right sub-derivation establishes $D, \Delta_2 \vdash A \to
  B$, a sequent of the very form covered by the present lemma. By
  the same case analysis applied to this sub-derivation, we
  obtain a derivation of $A, D, \Delta_2 \vdash B$, where $A$ is
  added to the context exactly as licensed by Tennant's form of
  $R\to$ (point~(ii) above). Reapplying $L\to$ with this new right
  premiss:
  \begin{prooftree}
  \AxiomC{$\vdots$}
  \UnaryInfC{$\Delta_1 \vdash C$}
  \AxiomC{$\vdots$}
  \UnaryInfC{$A, D, \Delta_2 \vdash B$}
  \RightLabel{\tiny{$L\to$}}
  \BinaryInfC{$A, C \to D, \Delta_1, \Delta_2 \vdash B$}
  \end{prooftree}
  i.e., $A, \Delta \vdash B$, as required. The context is
  propagated through $L\to$ unchanged.

\item \emph{Excluded case: the root rule is $L\lnot$.} This rule
  produces a conclusion of the form $\lnot C, \Delta \vdash$, with
  \emph{empty succedent}. It cannot produce $\Delta \backslash
  \{A\} \vdash A \to B$. This case does not arise.

\item \emph{Excluded case: the root rule is $R\to_{\mathbb{C}}$.}
  This rule has the form
  \begin{prooftree}
  \AxiomC{$A, \Delta \vdash$}
  \RightLabel{\tiny{$R\to_{\mathbb{C}}$}}
  \UnaryInfC{$\Delta \vdash A \to B$}
  \end{prooftree}
  The premiss $A, \Delta \vdash$ asserts that $A, \Delta$ is
  \emph{inconsistent}. This case is excluded by Tennant's
  consistency condition: $R\to_{\mathbb{C}}$ lies outside the
  scope of the present lemma, by his own caveat.
\end{itemize}

The cases  are exhaustive in  $\mathcal{F}$. Hence $R\to$  is provably
invertible in $\mathcal{F}$. Reading the syntactic derivation of DNS.1
from root  to top,  the last rule  to derive the  premiss of  DNS.1 is
$R\to$,    and    so    DNS.1   inherits    the    invertibility    of
$R\to$. \hfill$\blacksquare$

\subsubsection{Semantic proof}

From a semantic point of view, the syntactic invertibility is
translated by the fact that all possible distributions of truth
values on $A$ and $B$ in
\begin{prooftree}
\AxiomC{$A, \Delta \vdash B$}
\RightLabel{\tiny{DNS.1}}
\UnaryInfC{$(A \to B) \to B, \Delta \vdash B$}
\end{prooftree}
make premiss and conclusion \emph{equivalent}. The proof is
immediate in one fell swoop \emph{\`a la Quine}
\cite[pp.~45--52]{quine1982}. If $B$ is $\top$, then for both
possible values of $A$, premiss and conclusion of the rule reduce
to $\top$, hence are trivially equivalent. To disprove the
invertibility of DNS.1, $B$ must be $\bot$. But if $A$ is $\bot$,
we get
\begin{prooftree}
\AxiomC{$\bot, \Delta \vdash \bot$}
\RightLabel{\tiny{DNS.1}}
\UnaryInfC{$(\bot \to \bot) \to \bot, \Delta \vdash \bot$}
\end{prooftree}
i.e.,
\begin{prooftree}
\AxiomC{$\bot, \Delta \vdash \bot$}
\RightLabel{\tiny{DNS.1}}
\UnaryInfC{$\bot, \Delta \vdash \bot$}
\end{prooftree}
and if $A$ is $\top$:
\begin{prooftree}
\AxiomC{$\top, \Delta \vdash \bot$}
\RightLabel{\tiny{DNS.1}}
\UnaryInfC{$(\top \to \bot) \to \bot, \Delta \vdash \bot$}
\end{prooftree}
i.e.,
\begin{prooftree}
\AxiomC{$\top, \Delta \vdash \bot$}
\RightLabel{\tiny{DNS.1}}
\UnaryInfC{$\top, \Delta \vdash \bot$}
\end{prooftree}
Therefore, in all cases, premiss and conclusion are
\emph{equivalent}, which proves semantically that DNS.1 is
invertible.

\medskip
\noindent\emph{Two important points about this subproof}:
\begin{enumerate}
\item[(a)] Core logic \emph{respecting truth tables}
  \cite[p.~185]{tennant2017}, the Core logician cannot deny without
  contradiction that DNS.1 is invertible. This is \emph{not} a
  question of completeness, but of soundness.

\item[(b)] This semantic proof is more direct than the syntactic
  one. Note that it remains valid under the \emph{minimal logic}
  reading that treats $\bot$ as, essentially, a \emph{nonlogical}
  constant \cite[p.~89]{miller_nadathur_2012}, in which DNS.1 is
  provably invertible. The semantic proof therefore extends to
  $\mathbb{C}$. \hfill$\blacksquare$
\end{enumerate}

\noindent (Appendix, Block~C. The mechanisation establishes the
invertibility by \emph{one} structural induction on derivations ---
the inversion lemma \texttt{DNS1\_inversion\_lemma}, carried under
an invariant on contexts included in
$\{A,\ \lnot A,\ (A\to B)\to B\}$, since the extensional left rules
keep the principal implication available in the premisses and let
contexts grow. Its corollaries state the invertibility of DNS.1 at
the decisive instance
(\texttt{DNS1\_invertible\_at\_decisive\_instance\_in\_ℱ\_M}) and
the underivability, in $\mathcal{F}_{\mathbf{M}}$, of both the
premiss and the conclusion of the instance --- so that, at the
decisive instance, the equivalence of the semantic proof holds in
$\mathcal{F}_{\mathbf{M}}$ with both sides false, and the
load-bearing induction is the inversion lemma itself. The case
$R\to_{\mathbb{C}}$ is excluded by typing --- the mechanical
counterpart of Remark~\ref{rema1}. No ex falso principle is at
work anywhere: the object calculus has no $\bot$ and no such rule,
and the refutations of the mechanisation end in case analyses with
zero cases --- the empty recursor of an inductive type with no
constructor, not an axiom.)

\subsection{Deduction of $\overline{DNS.1}$}\label{sec:antidns}

The invertibility of DNS.1 means that whenever the conclusion is
derivable, the premiss is derivable too. By contraposition,
whenever the premiss is \emph{not} derivable, the conclusion is
\emph{not} derivable either. This is precisely rule
$\overline{DNS.1}$:
\begin{prooftree}
\AxiomC{$A, \Delta \nvdash B$}
\RightLabel{\tiny{$\overline{DNS.1}$}}
\UnaryInfC{$(A \to B) \to B, \Delta \nvdash B$}
\end{prooftree}

In the terminology of \emph{refutation systems} --- the deductive
systems for \emph{rejected} statements initiated by \L{}ukasiewicz
\cite{lukasiewicz1957} and developed for sequents by Tiomkin
\cite{tiomkin1988} and Goranko \cite{goranko1994}; see
\cite{goranko2020} for a survey --- $\overline{DNS.1}$ is a
\emph{refutation rule}, and its licence is exactly Goranko's
correctness discipline for antisequent calculi: a refutation rule
is correct when its converse is a correct rule of the system
\cite[Theorem~2.1]{goranko1994}. The converse of
$\overline{DNS.1}$ is the invertibility of DNS.1, established in
Section~\ref{sec:invertible}. $\overline{DNS.1}$ is therefore not a
meta-rule imported into the kernel from outside: it is a rule
derivable from the kernel's own invertibility, dormant in the
shadow of the system.

A rejection assertion, in \L{}ukasiewicz's sense, has inferential
content only inside a refutation system. The smallest refutation
system that the kernel of $\mathbb{C}$ licenses has \eqref{claim1}
as its only rejection axiom and $\overline{DNS.1}$ as its only
refutation rule; call it $\mathsf{R}$. Two verdicts about
$\mathsf{R}$ are certified (Appendix, Blocks~D and~E). The first is
that $\mathsf{R}$ is \emph{\L{}-correct} for
$\mathcal{F}_{\mathbf{M}}$: everything $\mathsf{R}$ rejects is
underivable in the minimal reading --- in
$\mathcal{F}_{\mathbf{M}}$, $\overline{DNS.1}$ is a metatheorem,
proved as the contrapositive of the certified invertibility
(Appendix, Block~D, theorems \texttt{anti\_DNS1\_holds\_in\_ℱ\_M}
and \texttt{refutation\_system\_Ł\_correct\_for\_ℱ\_M}). The second
verdict is delivered by the next step.

\subsection{Deduction of the contradiction}\label{sec:contradiction}

This point is the conclusion of the proof: as premiss of
$\overline{DNS.1}$, Claim 1 entails an antisequent which is in
contradiction with a consequence of DNS.2:
\begin{prooftree}
\AxiomC{}
\RightLabel{\tiny{(Claim 1)}}
\UnaryInfC{$\lnot A, A \nvdash B$}
\RightLabel{\tiny{$\overline{DNS.1}$}}
\UnaryInfC{$\lnot A, (A \to B) \to B \nvdash B$}
\AxiomC{}
\RightLabel{\tiny{\emph{Ax.}}}
\UnaryInfC{$A \vdash A$}
\RightLabel{\tiny{$L\lnot$}}
\UnaryInfC{$\lnot A, A \vdash$}
\RightLabel{\tiny{DNS.2}}
\UnaryInfC{$\lnot A, (A \to B) \to B \vdash B$}
\BinaryInfC{$\bot$}
\end{prooftree}
Therefore, \eqref{claim1} cannot be maintained in $\mathbb{C}$
without contradiction: if Core logic is consistent, it cannot be
paraconsistent. \hfill$\blacksquare$

In the vocabulary of the previous section, this collision is the
second certified verdict about $\mathsf{R}$: the same refutation
system is \emph{\L{}-incorrect} for $\mathcal{F}_{\mathbb{C}}$ ---
it rejects a sequent that $\mathcal{F}_{\mathbb{C}}$ derives, the
witness being produced by $R\to_{\mathbb{C}}$ alone (Appendix,
Block~E, theorem
\texttt{refutation\_system\_Ł\_incorrect\_for\_ℱ\_ℂ}).

The certified form of the Theorem is a \emph{pure conditional},
resting on no axiom at all: two named hypotheses --- Claim~1
itself, restricted to distinct atoms so that the axiom rule cannot
trivialise it, as Tennant posits it \cite[p.~156]{tennant2017}, and
\texttt{anti\_DNS1\_rule\_for\_ℂ}, the commitment that the kernel's
refutation rule $\overline{DNS.1}$ governs $\mathbb{C}$'s rejection
assertion at this single sequent --- jointly entail \texttt{False}
(Appendix, Block~E, theorem \texttt{claim1\_false}, with a closed
instance at the concrete atoms $0$ and $1$). Nothing foreign to
Core is involved anywhere, since every rule of
$\mathcal{F}_{\mathbf{M}}$ is a rule of $\mathbb{C}$;
Section~\ref{sec:scope-reason} below comments on this commitment.

\noindent\emph{Reply to an anticipated objection.} It might be
objected that displaying \texttt{anti\_DNS1\_rule\_for\_ℂ} as a
hypothesis of the final theorem weakens, or even invalidates, the
proof, by ``forcing'' the conclusion; and, more precisely, that
invertibility is an admissibility property which need not survive
the addition of a new rule, so that the invertibility of DNS.1,
established in $\mathcal{F}_{\mathbf{M}}$, may not be transferred
to $\mathcal{F}_{\mathbb{C}}$ --- indeed $R\to_{\mathbb{C}}$
produces a new derivation at the decisive instance.

The objection is correct --- and it is a theorem of this paper, not
an objection to it. The certification settles the status of the
hypothesis completely, on both sides (Appendix, Block~F). In the
minimal reading, $\overline{DNS.1}$ is a metatheorem, proved
outright. In the Core reading, the formalisation is charitable to
Tennant --- the fragment verifies Claim~1 itself
(\texttt{claim1\_holds\_in\_ℱ\_ℂ}) --- and yet, DNS.2 being
derivable through $R\to_{\mathbb{C}}$, it refutes the commitment
(\texttt{anti\_DNS1\_Ł\_incorrect\_for\_ℱ\_ℂ}; named
\texttt{ℱ\_ℂ\_not\_conservative\_at\_DNS1} in Versions~4 and~5 of
the file): the converse of $\overline{DNS.1}$, the invertibility
of DNS.1, does not survive the passage from
$\mathcal{F}_{\mathbf{M}}$ to $\mathcal{F}_{\mathbb{C}}$ --- the
rule-level form of the \L{}-incorrectness of $\mathsf{R}$
certified in Block~E, and the exact refutation, inside the
calculus, of the displayed hypothesis.

What the objection cannot do is save the paraconsistency claim. A
rejection assertion has inferential content only inside a
refutation system. If the Core logician rejects the hypothesis, as
the objection invites him to, then no refutation discipline
groundable in his own kernel governs \eqref{claim1}: the
metatheoretic properties of the rules that $\mathbb{C}$ shares with
$\mathbf{M}$ no longer bind $\mathbb{C}$'s rejection assertions,
the overlap of \cite[p.~35]{tennant2017} does no justificatory
work, and paraconsistency ``at the level of the turnstile'' is
proclaimed rather than grounded --- while the only candidate
discipline, once applied, is certifiably \L{}-incorrect for his
system. If he accepts the hypothesis, the theorem derives the
contradiction. Either way, \eqref{claim1} cannot be maintained as
Tennant asserts it. And he is committed to the hypothesis twice
over: by the overlap of \cite[p.~35]{tennant2017}, which places
$\mathbb{C}$ with $\mathbf{M}$ on the negative side of Lewis's
sequent --- asserting \eqref{claim1} ``as in $\mathbf{M}$'' just is
claiming the antisequent economy of the shared kernel --- and by
the invertibility of $R\to$ acknowledged in
\cite[p.~46]{tennant2017}, from which $\overline{DNS.1}$ follows.
The necessity in which the formalising logician finds himself of
displaying a commitment that the Core logician endorses in print
but cannot sustain in his own calculus is not an artifice of the
formalisation: it is the mechanical expression of the contradiction
stated by Theorem~\ref{thm:main}.

\section{Corollary}\label{sec:corollary}

\begin{proposition}\label{cor:claim2}
\eqref{claim2} is false; therefore it cannot establish correctly
that $\mathbb{C}$ overlaps minimal logic.
\end{proposition}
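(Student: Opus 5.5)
The plan is to repeat the four-step argument of Theorem~\ref{thm:main}, substituting $\lnot B$ for $B$ throughout. All the rules used there (\emph{Ax.}, $L\lnot$, $R\to$, $L\to$, $R\to_{\mathbb{C}}$) and both DNS rules of Table~\ref{tab:dns} are schematic in their formula variables. So the instance of DNS.1 and DNS.2 with succedent $\lnot B$ is obtained by the same derivations, with $Ax$ replaced by a derivation of $\lnot B \vdash \lnot B$. That identity sequent is no longer atomic, so it has to be built from $B \vdash B$ by $L\lnot$ and then $R\to$, reading $\lnot B$ as $B \to \bot$ in the style of the illustrative example of Section~\ref{sec:invertible}. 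If instead $\lnot$ is primitive, I would add the corresponding $R\lnot$ step, which is shared by $\mathbf{M}$ and $\mathbb{C}$.

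The key steps, in order, are these.
\begin{enumerate}
\item[(1)] Derive DNS.1 and DNS.2 at the instance
\[(A \to \lnot B) \to \lnot B,\ \Delta \vdash \lnot B .\]
\item[(2)] Invoke the invertibility of DNS.1 from Section~\ref{sec:invertible}. The inversion lemma for $R\to$ there is proved for an arbitrary succedent, so it applies verbatim with $\lnot B$ in place of $B$. The semantic proof also goes through unchanged, since in the truth-table argument $\lnot B$ takes the values $\top$ or $\bot$ just as $B$ did.
\item[(3)] Contrapose, as in Section~\ref{sec:antidns}, to obtain $\overline{DNS.1}$ with succedent $\lnot B$.
\item[(4)] Stage the collision. \eqref{claim2}, taken as premiss of $\overline{DNS.1}$, yields
\[\lnot A,\ (A\to\lnot B)\to\lnot B \nvdash \lnot B .\]
On the other side, $A\vdash A$ by \emph{Ax.}, then $\lnot A, A\vdash$ by $L\lnot$, then DNS.2 (with $R\to_{\mathbb{C}}$ introducing $A\to\lnot B$) yields
\[\lnot A,\ (A\to\lnot B)\to\lnot B \vdash \lnot B .\]
The two verdicts contradict each other.
\end{enumerate}
Hence \eqref{claim2} cannot be maintained in $\mathbb{C}$ on pain of contradiction. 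The third bullet of the overlap picture, $\mathbf{M}\not\subseteq\mathbb{C}$ ``because of \eqref{claim2}'', then loses its only ground.

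The main obstacle is step (2) at the decisive instance. The mechanised inversion lemma is carried under an invariant on contexts included in $\{A,\lnot A,(A\to B)\to B\}$, with $A,B$ distinct atoms. With $\lnot B$ compound, the invariant must be enlarged to $\{A,\lnot A,(A\to\lnot B)\to\lnot B\}$, and possibly to $B$ as well, since $\lnot B$ can now be decomposed on the right by $R\to$/$R\lnot$ and $B$ enters the context. I would first try restating the lemma with this enlarged invariant and checking that no new root-rule case arises beyond those in Section~\ref{sec:invertible}. One new case does arise: the right rule for $\lnot B$ at the root. It is handled exactly like the $R\to$ case, because the same point~(ii) argument puts the discharged $B$ back into context.

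The proviso that the atoms be distinct, so that \emph{Ax.} cannot trivialise Claim~2, is automatic here, since $A$ is atomic and $\lnot B$ is not. As in the Theorem, the result is a conditional on the commitment that $\overline{DNS.1}$ governs $\mathbb{C}$'s rejections at this sequent. The same dilemma of the anticipated-objection reply therefore applies unchanged: either that commitment is accepted and contradiction follows, or it is rejected and \eqref{claim2} is left ungrounded.
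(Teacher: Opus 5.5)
Your top-level plan is the paper's. Its proof is the Claim~1 collision with $\lnot B$ for $B$, and the certified version is a purely textual substitution in the DNS derivations, the inversion lemma and its corollaries, and the Claim hypothesis. The trouble is the repair you add for the non-atomic identity sequent $\lnot B\vdash\lnot B$. The worry is fair against point~(i) of Section~\ref{sec:invertible}, but the paper's proof never meets it. In $\mathcal{F}$ as encoded (Block~A), \emph{Ax.} is membership for an \emph{arbitrary} formula, so $\lnot B\vdash\lnot B$ is an axiom instance. Moreover, $\mathcal{F}$ has no right rule for $\lnot$ at all. The substituted inversion lemma therefore has exactly the original five cases, $B$ never enters a context, and the invariant is just $\{A,\lnot A,(A\to\lnot B)\to\lnot B\}$. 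Your first option does not fit $\mathcal{F}$ anyway: there is no $\bot$, and $L\lnot$ applied to $B\vdash B$ leaves an empty succedent, to which $R\to$ does not apply.

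Your second option, adding $R\lnot$, breaks the very step you flagged as the main obstacle. With $R\lnot$ in the minimal reading, $\lnot A, A\vdash\lnot B$ becomes derivable. It is the $\mathbf{M}$-derivation of the Introduction: $L\lnot$ (with left Weakening in $\mathbf{M}$, or directly in the paper's pool encoding) yields $B,\lnot A,A\vdash$, and $R\lnot$ then gives $\lnot A, A\vdash\lnot B$. So the enlarged lemma, saying that nothing derived from such contexts concludes $\lnot B$, is false.

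Its $R\lnot$ case is not like the $R\to$ case. In the $R\to$ case the premiss keeps the non-empty succedent $\lnot B$ over a context inside the invariant, so the induction hypothesis closes it. The premiss $B,\Delta\vdash$ of $R\lnot$ has an empty succedent, about which the lemma says nothing, and for $\Delta=\{\lnot A,A\}$ it is derivable. Nor is $R\lnot$ ``shared by $\mathbf{M}$ and $\mathbb{C}$'' in the relevant sense. Core's $\lnot$-introduction requires the discharged assumption to be used, while $\mathbf{M}$'s (with Weakening) does not, and that difference is exactly what Claim~2 records. Once $R\lnot$ is added, two things fail: the analogue of ``Claim~1 holds in $\mathcal{F}_{\mathbf{M}}$'', and the \L{}-correctness of the refutation system for $\mathcal{F}_{\mathbf{M}}$, whose rejection axiom becomes derivable.

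Finally, the distinctness proviso is not automatic. If $B$ is $A$, then $\lnot B$ is $\lnot A$, which sits in the antecedent, so \emph{Ax.} trivialises Claim~2. The \emph{Ax.} case of the substituted inversion lemma uses $a\neq b$ precisely to separate $\lnot A$ from $\lnot B$, and the paper keeps that hypothesis. Drop the $R\lnot$ detour and keep $a\neq b$, and your steps (1)--(4) coincide with the paper's proof.
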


\noindent\emph{Proof.} \eqref{claim2} meets exactly the same
contradiction:
\begin{prooftree}
\AxiomC{}
\RightLabel{\tiny{(Claim 2)}}
\UnaryInfC{$\lnot A, A \nvdash \lnot B$}
\RightLabel{\tiny{$\overline{DNS.1}$}}
\UnaryInfC{$\lnot A, (A \to \lnot B) \to \lnot B \nvdash \lnot B$}
\AxiomC{}
\RightLabel{\tiny{\emph{Ax.}}}
\UnaryInfC{$A \vdash A$}
\RightLabel{\tiny{$L\lnot$}}
\UnaryInfC{$\lnot A, A \vdash$}
\RightLabel{\tiny{DNS.2}}
\UnaryInfC{$\lnot A, (A \to \lnot B) \to \lnot B \vdash \lnot B$}
\BinaryInfC{$\bot$}
\end{prooftree}
Therefore, because \eqref{claim2} entails the same contradiction,
it is false, and it cannot establish correctly that $\mathbb{C}$
overlaps minimal logic. \hfill$\blacksquare$

\noindent\emph{Certified counterpart.} The certified proof of
Claim 2's collapse is obtained from \texttt{claim1\_false} by
substituting $\lnot B$ for $B$ throughout the statements and proofs
of \texttt{DNS1\_in\_ℱ}, \texttt{DNS2\_instantiated},
\texttt{DNS1\_inversion\_lemma},
\texttt{DNS1\_conclusion\_underivable\_in\_ℱ\_M},
\texttt{anti\_DNS1\_holds\_in\_ℱ\_M}, and the hypothesis
\texttt{Claim1\_Tennant}. The substitution is purely textual; the
proof scripts are identical modulo this renaming. We omit the
explicit re-instantiation.

\section{Scope and reason of this contradiction}\label{sec:scope-reason}

\subsection{Scope}\label{sec:scope}

This contradiction does \emph{not} mean that Core logic is
inconsistent, but more precisely that \eqref{claim1} cannot be
sustained in this logical system as Tennant asserts it, and that the
same conclusion can be drawn about \eqref{claim2}. Every deduction has been made
\emph{at the level of the turnstile}, justified by the sequent
calculus for Core logic, under the conditions (like consistency)
that Tennant himself accepts. The above Theorem and its Corollary
show that Tennant's two foundational claims about $\mathbb{C}$ ---
its paraconsistency and its merely overlapping relation to minimal
logic --- cannot be sustained.

This proof has been mechanically checked in Coq, block by block, in
Appendix~\ref{app:coq}, with the file
\texttt{core\_logic\_is\_not\_paraconsistent.v} certified by Coq's
kernel without any axiom (\texttt{Print Assumptions} returns
\texttt{Closed under the global context} for every one of its twelve
statements), and independently in Lean~4 (\texttt{\#print axioms}
reports \texttt{[propext]} only; see Appendix~\ref{secA1}). It is logically
flawless. Because it
entails $\overline{DNS.1}$, the invertibility of DNS.1 is the key
point. The rule of \emph{Cut} is Core-admissible in consistent
contexts, and it could have been used to prove that DNS.1 is
invertible in $\mathbb{C}$. The proof by structural induction has
been preferred because it leaves no room for dispute. The status
of \emph{Cut} in inconsistent contexts is another question,
examined in the next section.

Therefore, for the sake of consistency, the Core logician must
abandon at least one of his foundational claims. The next section
diagnoses why this collapse is unavoidable.

\subsection{Reason}\label{sec:reason}

Abandoning \eqref{claim1} would mean losing the identity of
$\mathbb{C}$, which would then collapse into $\mathbf{I}$. And
indeed, for any logician who accepts the rule of \emph{Cut},
rule $R\to_{\mathbb{C}}$ is derivable in $\mathbf{I}$:
\begin{prooftree}
\AxiomC{$A, \Delta \vdash$}
\RightLabel{\tiny{$R~Wk.$}}
\UnaryInfC{$A, \Delta \vdash B$}
\RightLabel{\tiny{$R\to$}}
\UnaryInfC{$\Delta \vdash A \to B$}
\end{prooftree}

Conversely, \emph{Weakening on the right} is derivable in
$\mathbb{C}$ once $R\to_{\mathbb{C}}$ and ordinary Cut are
admitted:
\begin{prooftree}
\AxiomC{$A, \Delta \vdash$}
\RightLabel{\tiny{$R\to_{\mathbb{C}}$}}
\UnaryInfC{$\Delta \vdash A \to B$}
\AxiomC{}
\RightLabel{\tiny{$Ax.$}}
\UnaryInfC{$A \vdash A$}
\AxiomC{}
\RightLabel{\tiny{$Ax.$}}
\UnaryInfC{$B \vdash B$}
\RightLabel{\tiny{$L\to$}}
\BinaryInfC{$A \to B, A \vdash B$}
\RightLabel{\tiny{$Cut$}}
\BinaryInfC{$A, \Delta \vdash B$}
\end{prooftree}

This double derivability shows that $R\to_{\mathbb{C}}$ and
ordinary Cut on the one hand, and $\mathbf{I}$ on the other, are
equi-expressive. The conclusion $\mathbb{C} = \mathbf{I}$ is hard
to escape.

Tennant's response is that the  only Cut admissible in $\mathbb{C}$ is
a \emph{restricted} one  --- call it $\mathbb{C}ut$ ---  which cuts on
the right  as well  as on  the left, in  case of  inconsistent context
\cite[p.   185]{tennant2017}.   With   $\mathbb{C}ut$,  the   previous
derivation ends differently:
\begin{prooftree}
\AxiomC{$A, \Delta \vdash$}
\RightLabel{\tiny{$R\to_{\mathbb{C}}$}}
\UnaryInfC{$\Delta \vdash A \to B$}
\AxiomC{}
\RightLabel{\tiny{$Ax.$}}
\UnaryInfC{$A \vdash A$}
\AxiomC{}
\RightLabel{\tiny{$Ax.$}}
\UnaryInfC{$B \vdash B$}
\RightLabel{\tiny{$L\to$}}
\BinaryInfC{$A \to B, A \vdash B$}
\RightLabel{\tiny{$\mathbb{C}ut$}}
\BinaryInfC{$A, \Delta \vdash$}
\end{prooftree}

The premiss on the left and the conclusion of $\mathbb{C}ut$ are
identical: therefore $\mathbb{C}ut$ is not a \emph{Cut}, but a
\emph{loop}. In computer-science terms, $\mathbb{C}ut$ is a fix
whose only output is its input --- a non-terminating substitute
for the rule it was meant to replace.

The real diagnosis is therefore the following. $R\to_{\mathbb{C}}$
introduces in its conclusion a formula $B$ that occurs
\emph{nowhere} in its premiss $A, \Delta \vdash$. This is the
syntactic signature of a \emph{Cut}: $B$ plays the role of an
eliminated cut-formula. A genuine introduction rule preserves the
subformulas of its premiss in its conclusion; $R\to_{\mathbb{C}}$
does not. As a consequence, the subformula property fails in
$\mathbb{C}$. Recall von Plato's definition
\cite[p.~74]{vonplato2013}:

\begin{quote}
  \textbf{Subformula property for sequent calculus.} Each formula
  in a derivation with the logical rules of sequent calculus is a
  subformula in the endsequent.
\end{quote}

\paragraph{The same phenomenon in natural deduction.} In natural
deduction, $R\to_{\mathbb{C}}$ reads: from a deduction of absurdity
from hypothesis $A$, infer $A \to B$, discharging $A$. Just as, in
sequent calculus, $R\to_{\mathbb{C}}$ compresses Weakening on the
right followed by $R\to$ (as displayed above), in natural deduction
it hides an \emph{ex falso} step inside an introduction. Add this
rule to minimal logic $\mathbf{M}$, where $\bot$ is an ordinary
atom, and \emph{ex falso quodlibet} becomes derivable:
\begin{prooftree}
\AxiomC{$[\bot]^{1}$}
\RightLabel{\tiny{$R\to_{\mathbb{C}}$, 1}}
\UnaryInfC{$\bot \to B$}
\AxiomC{$\bot$}
\RightLabel{\tiny{$\to E$}}
\BinaryInfC{$B$}
\end{prooftree}
so that $\mathbf{M}+R\to_{\mathbb{C}}$ is, deductively,
intuitionistic logic $\mathbf{I}$ --- the natural-deduction
counterpart of the double derivability displayed above; the fact is
elementary enough to be a textbook exercise
(\cite[exercise~4.3, p.~178; solution p.~312]{davidnourraffalli2003}). But look at the shape of this
deduction. The implication $\bot \to B$ is introduced and
immediately eliminated --- a maximal formula --- and the detour is
\emph{irreducible}: the standard reduction would return the
conclusion of the subordinate deduction, which is $\bot$, not $B$;
the $B$ of $R\to_{\mathbb{C}}$ has no ground from which it could be
substituted. Nor can the detour be avoided: the main branch of a
normal deduction of the atom $B$ would consist of eliminations
descending from an assumption, and the only available assumption is
the atom $\bot$, to which no elimination applies. Hence in
$\mathbf{M}+R\to_{\mathbb{C}}$, \emph{ex falso} is derivable but
has \emph{no normal derivation}: normalisation fails. Tennant's
response is, once more, to identify proofs with normal proofs, so
that the deduction above is simply not a proof of $\mathbb{C}$; but
the price is the loss of composition ($\mathbb{C}ut$, examined
above), and the diagnosis is only sharpened. An introduction rule
whose conclusion cannot face its own elimination violates the
inversion principle that governs the harmony of introduction and
elimination rules
\cite[p.~33]{prawitz1965} --- the very principle on which Tennant's
proof-theoretic semantics is built. It will not do to reply that a
logician may stipulate whatever rules he pleases: by Tennant's own
standard, $R\to_{\mathbb{C}}$ is not a meaning-conferring rule; it
is the codification of an irreducible detour --- the very form of an
abnormal derivation, quarantined rather than eliminated.

\paragraph{Mechanical reading of the same diagnosis.} The same
phenomenon receives a precise mechanical formulation in the
certification. The contradiction (Section~\ref{sec:contradiction})
rests on two hypotheses: \texttt{Claim1\_Tennant} (which is
Tennant's own commitment, \cite[p.~156]{tennant2017}) and
\texttt{anti\_DNS1\_rule\_for\_ℂ}, the kernel's refutation rule
stated for $\mathbb{C}$'s reading. The status of the second
hypothesis is itself a mechanically certified witness of
$R\to_{\mathbb{C}}$'s effect. In the minimal reading
$\mathcal{F}_{\mathbf{M}}$ --- the same predicate \emph{stripped
of} the constructor \texttt{R\_arrow\_core} --- the rule is a
metatheorem, proved outright as the contrapositive of a certified
invertibility. In the Core reading $\mathcal{F}_{\mathbb{C}}$ it
fails: DNS.2 makes its conclusion derivable while Claim~1 keeps its
premiss underivable --- the \L{}-incorrectness of
Section~\ref{sec:contradiction}. The commitment is non-trivial
\emph{precisely because} $R\to_{\mathbb{C}}$ is in the system: it
is the single rule that makes the inclusion of derivations
asymmetric, and the contradiction of
Section~\ref{sec:contradiction} traces back exactly to that
asymmetry. The mechanical and proof-theoretic diagnoses thus
converge: $R\to_{\mathbb{C}}$ is the rule that breaks the symmetry
between $\mathbf{M}$ and $\mathbb{C}$ on the fatal sequent, and the
failure of the kernel's refutation discipline at that very sequent
is the mechanical counterpart of the subformula-property failure.

In private correspondence, von Plato pointed out to me that the
subformula  property  --- the  most  famous  consequence of  Gentzen's
\emph{Hauptsatz}  --- gives  a  proof-theoretic  validation of  Kant's
characterisation   of    general   and   pure   logic    as   analytic
\cite[p.~94]{kant2007}.  Therefore, a system in which it fails, and in
which it  cannot be restored  by a genuine  \emph{Cut-elimination}, is
not a logical theory in the proof-theoretic sense.

In conclusion, the Theorem and the present dilemma are one and the
same phenomenon, seen at two levels. The Theorem shows that
\eqref{claim1}, read as a rejection governed by the refutation
discipline of the very kernel that grounds the overlap, entails a
contradiction in $\mathbb{C}$. The present dilemma prices the only
escape: either $\mathbb{C}$ accepts ordinary Cut, in which case it
collapses into $\mathbf{I}$, or it restricts \emph{Cut} to
$\mathbb{C}ut$, in which case $\mathbb{C}ut$ is a loop and the
subformula property fails. Tennant's logic stands on the horns of
this dilemma --- between a system that is no longer paraconsistent,
and a system that is no longer a logic.

\bibliography{paper}


\begin{thebibliography}{9}
\ifx \bisbn   \undefined \def \bisbn  #1{ISBN #1}\fi
\ifx \binits  \undefined \def \binits#1{#1}\fi
\ifx \bauthor  \undefined \def \bauthor#1{#1}\fi
\ifx \batitle  \undefined \def \batitle#1{#1}\fi
\ifx \bjtitle  \undefined \def \bjtitle#1{#1}\fi
\ifx \bvolume  \undefined \def \bvolume#1{\textbf{#1}}\fi
\ifx \byear  \undefined \def \byear#1{#1}\fi
\ifx \bissue  \undefined \def \bissue#1{#1}\fi
\ifx \bfpage  \undefined \def \bfpage#1{#1}\fi
\ifx \blpage  \undefined \def \blpage #1{#1}\fi
\ifx \burl  \undefined \def \burl#1{\textsf{#1}}\fi
\ifx \doiurl  \undefined \def \doiurl#1{\url{https://doi.org/#1}}\fi
\ifx \betal  \undefined \def \betal{\textit{et al.}}\fi
\ifx \binstitute  \undefined \def \binstitute#1{#1}\fi
\ifx \binstitutionaled  \undefined \def \binstitutionaled#1{#1}\fi
\ifx \bctitle  \undefined \def \bctitle#1{#1}\fi
\ifx \beditor  \undefined \def \beditor#1{#1}\fi
\ifx \bpublisher  \undefined \def \bpublisher#1{#1}\fi
\ifx \bbtitle  \undefined \def \bbtitle#1{#1}\fi
\ifx \bedition  \undefined \def \bedition#1{#1}\fi
\ifx \bseriesno  \undefined \def \bseriesno#1{#1}\fi
\ifx \blocation  \undefined \def \blocation#1{#1}\fi
\ifx \bsertitle  \undefined \def \bsertitle#1{#1}\fi
\ifx \bsnm \undefined \def \bsnm#1{#1}\fi
\ifx \bsuffix \undefined \def \bsuffix#1{#1}\fi
\ifx \bparticle \undefined \def \bparticle#1{#1}\fi
\ifx \barticle \undefined \def \barticle#1{#1}\fi
\bibcommenthead
\ifx \bconfdate \undefined \def \bconfdate #1{#1}\fi
\ifx \botherref \undefined \def \botherref #1{#1}\fi
\ifx \url \undefined \def \url#1{\textsf{#1}}\fi
\ifx \bchapter \undefined \def \bchapter#1{#1}\fi
\ifx \bbook \undefined \def \bbook#1{#1}\fi
\ifx \bcomment \undefined \def \bcomment#1{#1}\fi
\ifx \oauthor \undefined \def \oauthor#1{#1}\fi
\ifx \citeauthoryear \undefined \def \citeauthoryear#1{#1}\fi
\ifx \endbibitem  \undefined \def \endbibitem {}\fi
\ifx \bconflocation  \undefined \def \bconflocation#1{#1}\fi
\ifx \arxivurl  \undefined \def \arxivurl#1{\textsf{#1}}\fi
\csname PreBibitemsHook\endcsname

\bibitem[\protect\citeauthoryear{Tennant}{1987}]{tennant1987}
\begin{bbook}
\bauthor{\bsnm{Tennant}, \binits{N.}}:
\bbtitle{Anti-Realism and Logic}.
\bpublisher{Oxford University Press},
\blocation{Oxford}
(\byear{1987})
\end{bbook}
\endbibitem

\bibitem[\protect\citeauthoryear{Tennant}{2017}]{tennant2017}
\begin{bbook}
\bauthor{\bsnm{Tennant}, \binits{N.}}:
\bbtitle{{Core Logic}},
\bedition{1}st edn.
\bpublisher{Oxford University Press},
\blocation{Oxford}
(\byear{2017})
\end{bbook}
\endbibitem

\bibitem[\protect\citeauthoryear{Anderson and
  Belnap}{1975}]{anderson_belnap_1975}
\begin{bbook}
\bauthor{\bsnm{Anderson}, \binits{A.R.}},
\bauthor{\bsnm{Belnap}, \binits{N.D.}}:
\bbtitle{Entailment: The Logic of Relevance and Necessity}
vol. \bseriesno{I}.
\bpublisher{Princeton University Press},
\blocation{Princeton}
(\byear{1975})
\end{bbook}
\endbibitem

\bibitem[\protect\citeauthoryear{Slaney}{1994}]{Slaney94minlog}
\begin{botherref}
\oauthor{\bsnm{Slaney}, \binits{J.}}:
{MINLOG}.
Software
(1994)
\end{botherref}
\endbibitem

\bibitem[\protect\citeauthoryear{{von Plato}}{2013}]{vonplato2013}
\begin{bbook}
\bauthor{\bsnm{{von Plato}}, \binits{J.}}:
\bbtitle{{Elements of Logical Reasoning}}.
\bpublisher{Cambridge University Press},
\blocation{Cambridge}
(\byear{2013})
\end{bbook}
\endbibitem

\bibitem[\protect\citeauthoryear{Negri and {von Plato}}{2011}]{platonegri2011}
\begin{bbook}
\bauthor{\bsnm{Negri}, \binits{S.}},
\bauthor{\bsnm{{von Plato}}, \binits{J.}}:
\bbtitle{{Proof Analysis: A Contribution to {Hilbert}'s Last Problem}}.
\bpublisher{Cambridge University Press},
\blocation{Cambridge}
(\byear{2011})
\end{bbook}
\endbibitem

\bibitem[\protect\citeauthoryear{Quine}{1982}]{quine1982}
\begin{bbook}
\bauthor{\bsnm{Quine}, \binits{W.v.O.}}:
\bbtitle{{Methods of Logic}},
\bedition{4}th edn.
\bpublisher{Harvard University Press},
\blocation{Cambridge, Massachusetts}
(\byear{1982})
\end{bbook}
\endbibitem

\bibitem[\protect\citeauthoryear{Miller and
  Nadathur}{2012}]{miller_nadathur_2012}
\begin{bbook}
\bauthor{\bsnm{Miller}, \binits{D.}},
\bauthor{\bsnm{Nadathur}, \binits{G.}}:
\bbtitle{Programming with Higher-Order Logic}.
\bpublisher{Cambridge University Press},
\blocation{Cambridge}
(\byear{2012})
\end{bbook}
\endbibitem

\bibitem[\protect\citeauthoryear{David, Nour and
  Raffalli}{2003}]{davidnourraffalli2003}
\begin{bbook}
\bauthor{\bsnm{David}, \binits{R.}},
\bauthor{\bsnm{Nour}, \binits{K.}},
\bauthor{\bsnm{Raffalli}, \binits{C.}}:
\bbtitle{{Introduction \`a la logique. Th\'eorie de la
  d\'emonstration}},
\bedition{2}nd edn.
\bpublisher{Dunod},
\blocation{Paris}
(\byear{2003})
\end{bbook}
\endbibitem

\bibitem[\protect\citeauthoryear{Prawitz}{1965}]{prawitz1965}
\begin{bbook}
\bauthor{\bsnm{Prawitz}, \binits{D.}}:
\bbtitle{{Natural Deduction: A Proof-Theoretical Study}},
\bpublisher{Almqvist \& Wiksell},
\blocation{Stockholm}
(\byear{1965})
\end{bbook}
\endbibitem

\bibitem[\protect\citeauthoryear{Kant}{2007}]{kant2007}
\begin{bbook}
\bauthor{\bsnm{Kant}, \binits{I.}}:
\bbtitle{Critique of {Pure} {Reason}},
\bedition{Rev. ed.} edn.
\bpublisher{Penguin Classics},
\blocation{London}
(\byear{2007})
\end{bbook}
\endbibitem


\bibitem[\protect\citeauthoryear{Goranko}{1994}]{goranko1994}
\begin{barticle}
\bauthor{\bsnm{Goranko}, \binits{V.}}:
\batitle{Refutation systems in modal logic}.
\bjtitle{Studia Logica}
\bvolume{53}(\bissue{2}),
\bfpage{299}--\blpage{324}
(\byear{1994})
\end{barticle}
\endbibitem

\bibitem[\protect\citeauthoryear{Goranko, Pulcini and
  Skura}{2020}]{goranko2020}
\begin{botherref}
\oauthor{\bsnm{Goranko}, \binits{V.}},
\oauthor{\bsnm{Pulcini}, \binits{G.}},
\oauthor{\bsnm{Skura}, \binits{T.}}:
Refutation systems: an overview and some applications to
philosophical logics.
In: Liu, F., Ono, H., Yu, J. (eds.) Knowledge, Proof and Dynamics,
pp. 173--197. Springer, Singapore (2020)
\end{botherref}
\endbibitem

\bibitem[\protect\citeauthoryear{\L{}ukasiewicz}{1957}]{lukasiewicz1957}
\begin{bbook}
\bauthor{\bsnm{\L{}ukasiewicz}, \binits{J.}}:
\bbtitle{{Aristotle's Syllogistic from the Standpoint of Modern
  Formal Logic}},
\bedition{2}nd edn.
\bpublisher{Clarendon Press},
\blocation{Oxford}
(\byear{1957})
\end{bbook}
\endbibitem

\bibitem[\protect\citeauthoryear{Tiomkin}{1988}]{tiomkin1988}
\begin{botherref}
\oauthor{\bsnm{Tiomkin}, \binits{M.L.}}:
Proving unprovability.
In: Proceedings of the Third Annual Symposium on Logic in Computer
Science (LICS~'88), pp. 22--26. IEEE Computer Society, Edinburgh
(1988)
\end{botherref}
\endbibitem

\end{thebibliography}

\begin{appendices}

\section{The certified proof}\label{app:coq}

The Coq file \texttt{core\_logic\_is\_not\_paraconsistent.v}
certifies the four-step proof of Section~\ref{sec:theorem} block by
block; the correspondence between blocks and steps is recalled at
the head of each block. The file is reproduced here in full (its
header, which documents this correspondence, is the only part
omitted). Verification: \texttt{Print Assumptions} returns
\texttt{Closed under the global context} for every one of the
twelve certified statements --- nothing is assumed; the two
principles the argument grants to Tennant are explicit hypotheses
of the final theorem, not axioms. The Lean~4 twin of the file
proves the same twelve statements, with \texttt{\#print axioms}
reporting \texttt{[propext]} alone (Appendix~\ref{secA1}).

\subsection*{Block A --- Language and calculus
(Section~\ref{sec:fragment})}

\begin{coqbox}
\begin{verbatim}
(* ══ Block A — Language and calculus ══ *)

From Coq Require Import List ListSet.
Import ListNotations.

Inductive formula : Type :=
  | Var  : nat -> formula
  | Neg  : formula -> formula
  | Impl : formula -> formula -> formula.

Inductive fragment_F : Type :=
  | minimal_F
  | core_logic.

(* [Some A] is a one-formula succedent.
   [None] is the empty succedent.
   Contexts are technically lists, but the left rules locate their
   principal formula extensionally, by membership: no structural
   rule is primitive. An antisequent Γ ⊬ C is rendered directly as
   the type [derivable f G C -> False]. *)

Inductive derivable :
  fragment_F -> set formula -> option formula -> Prop :=

  | Ax :
      forall f G A,
        In A G ->
        derivable f G (Some A)

  | L_neg :
      forall f G A,
        In (Neg A) G ->
        derivable f G (Some A) ->
        derivable f G None

  | R_arrow :
      forall f G A B,
        derivable f (A :: G) (Some B) ->
        derivable f G (Some (Impl A B))

  | L_arrow :
      forall f G A B C,
        In (Impl A B) G ->
        derivable f G (Some A) ->
        derivable f (B :: G) C ->
        derivable f G C

  | R_arrow_core :
      forall G A B,
        derivable core_logic (A :: G) None ->
        derivable core_logic G (Some (Impl A B)).
\end{verbatim}
\end{coqbox}

\subsection*{Block B --- Step 1: DNS.1 and DNS.2 are derivable
(Section~\ref{sec:rules})}

\begin{coqbox}
\begin{verbatim}
(* ══ Block B — Step 1: DNS.1 and DNS.2 are derivable ══ *)

(* Context monotonicity. This lemma is NOT Tennant's Weakening —
   no such rule exists in ℂ, and none is added here — and it is
   NEITHER Cut NOR transitivity: no two derivations are composed,
   no cut formula exists, no formula is eliminated. It rebuilds ONE
   derivation, unchanged and height-preservingly, in a larger pool
   of available assumptions. Its ground is the set convention on
   contexts: [derivable f G C] says that some subsequent Δ ⊆ G is
   Tennant-derivable ⊢ C, so extending the pool is deductively
   inert — it is the very junction of contexts (Δ, Γ) that
   Tennant's own rule L→ performs notationally when it introduces
   its principal formula beside the contexts of its premisses.
   Safety is two-directional: the monotone encoding derives MORE
   sequents than Tennant's strict reading, so every underivability
   proved below holds a fortiori of the strict reading; and the
   derivable witness of the collision uses every formula of its
   context, so no dilution slack is exploited on the positive side.
   Proof by induction on the derivation. *)

Lemma context_monotonicity :
  forall f G G' C,
    derivable f G C ->
    (forall A, In A G -> In A G') ->
    derivable f G' C.
Proof.
  intros f G G' C HD.
  revert G'.
  induction HD; intros G' Hsub.
  - apply Ax. apply Hsub. exact H.
  - apply L_neg with (A := A).
    + apply Hsub. exact H.
    + apply IHHD. exact Hsub.
  - apply R_arrow.
    apply IHHD.
    intros X HX. simpl in HX |- *.
    destruct HX as [HX | HX].
    + left. exact HX.
    + right. apply Hsub. exact HX.
  - apply L_arrow with (A := A) (B := B).
    + apply Hsub. exact H.
    + apply IHHD1. exact Hsub.
    + apply IHHD2.
      intros X HX. simpl in HX |- *.
      destruct HX as [HX | HX].
      * left. exact HX.
      * right. apply Hsub. exact HX.
  - apply R_arrow_core.
    apply IHHD.
    intros X HX. simpl in HX |- *.
    destruct HX as [HX | HX].
    + left. exact HX.
    + right. apply Hsub. exact HX.
Qed.

(* The inconsistency sequent ¬A, A ⊢ that feeds DNS.2, immediate
   since L¬ locates ¬A by membership. *)

Lemma absurdity_core :
  forall a : nat,
    derivable core_logic [Var a; Neg (Var a)] None.
Proof.
  intro a.
  apply L_neg with (A := Var a).
  - simpl. right. left. reflexivity.
  - apply Ax. simpl. left. reflexivity.
Qed.

(* DNS.1, proved uniformly in the fragment indicator f, hence at
   once in ℱ_𝐌 and in ℱ_ℂ: its derivation uses only the four shared
   rules and context monotonicity. *)

Theorem DNS1_in_ℱ :
  forall (f : fragment_F) (a b : nat),
    derivable f [Var a; Neg (Var a)] (Some (Var b)) ->
    derivable f
      [Impl (Impl (Var a) (Var b)) (Var b); Neg (Var a)]
      (Some (Var b)).
Proof.
  intros f a b H.
  eapply L_arrow.
  - left. reflexivity.
  - apply R_arrow.
    eapply context_monotonicity.
    + exact H.
    + intros X HX. simpl in HX.
      destruct HX as [HX | [HX | []]]; subst X.
      * left. reflexivity.
      * right. right. left. reflexivity.
  - apply Ax. left. reflexivity.
Qed.

(* DNS.2 in the Core reading, through R→ℂ. *)

Theorem DNS2_instantiated :
  forall a b : nat,
    derivable core_logic [Var a; Neg (Var a)] None ->
    derivable core_logic
      [Impl (Impl (Var a) (Var b)) (Var b); Neg (Var a)]
      (Some (Var b)).
Proof.
  intros a b HD.
  apply L_arrow with
    (A := Impl (Var a) (Var b))
    (B := Var b).
  - simpl. left. reflexivity.
  - eapply context_monotonicity.
    + apply R_arrow_core. exact HD.
    + intros X HX. simpl in HX |- *.
      destruct HX as [HX | []].
      right. left. exact HX.
  - apply Ax. simpl. left. reflexivity.
Qed.
\end{verbatim}
\end{coqbox}

\subsection*{Block C --- Step 2: DNS.1 is invertible in
$\mathcal{F}_{\mathbf{M}}$ (Section~\ref{sec:invertible})}

\begin{coqbox}
\begin{verbatim}
(* ══ Block C — Step 2: DNS.1 is invertible in ℱ_𝐌 ══ *)

(* The inversion lemma, by structural induction on derivations —
   von Plato's method, adapted to extensional contexts: since the
   left rules locate their principal formula by membership, the
   principal implication remains available in the premisses and
   contexts may grow; the induction is therefore carried under an
   invariant on contexts included in {A, ¬A, (A→B)→B}, and it
   establishes that no ℱ_𝐌-derivation from such a context concludes
   B or A→B. The case R_arrow_core is excluded by typing
   (discriminate Hf): the Coq counterpart of Tennant's consistency
   proviso for contexts. *)

Lemma DNS1_inversion_lemma :
  forall a b : nat,
    a <> b ->
    forall f G C,
      derivable f G C ->
      f = minimal_F ->
      (forall X, In X G ->
        X = Var a \/ X = Neg (Var a) \/
        X = Impl (Impl (Var a) (Var b)) (Var b)) ->
      C <> Some (Var b) /\ C <> Some (Impl (Var a) (Var b)).
Proof.
  intros a b Hab f G C HD.
  induction HD; intros Hf HS.

  - (* Ax: the succedent is a member of the context, hence one of
       the three formulas of the invariant; none of them is Var b or
       Var a -> Var b when a <> b. *)
    destruct (HS A H) as [H1 | [H1 | H1]]; subst A;
      split; intro HC; congruence.

  - (* L_neg: empty succedent. *)
    split; intro HC; discriminate.

  - (* R_arrow: if the succedent were Var a -> Var b, the premiss
       would derive Var b from an invariant-closed context. *)
    split; intro HC.
    + discriminate.
    + injection HC as HA HB.
      subst A B.
      assert (HS' : forall X, In X (Var a :: G) ->
        X = Var a \/ X = Neg (Var a) \/
        X = Impl (Impl (Var a) (Var b)) (Var b)).
      { intros X [HX | HX].
        - left. symmetry. exact HX.
        - apply HS. exact HX. }
      destruct (IHHD Hf HS') as [Hcontr _].
      apply Hcontr.
      reflexivity.

  - (* L_arrow: the principal implication can only be
       (Var a -> Var b) -> Var b, whose left premiss derives
       Var a -> Var b from the same invariant-closed context,
       contradicting the induction hypothesis. This is the case
       where membership keeps the principal implication available. *)
    destruct (HS _ H) as [H1 | [H1 | H1]]; try discriminate.
    injection H1 as HA HB.
    subst A B.
    destruct (proj2 (IHHD1 Hf HS) eq_refl).

  - (* R_arrow_core does not belong to ℱ_𝐌. *)
    discriminate Hf.
Qed.

(* First corollary: the invertibility of DNS.1 at the decisive
   instance is a metatheorem of ℱ_𝐌. *)

Theorem DNS1_invertible_at_decisive_instance_in_ℱ_M :
  forall a b : nat,
    a <> b ->
    derivable minimal_F
      [Impl (Impl (Var a) (Var b)) (Var b); Neg (Var a)]
      (Some (Var b)) ->
    derivable minimal_F [Var a; Neg (Var a)] (Some (Var b)).
Proof.
  intros a b Hab HD.
  assert (HS : forall X,
      In X [Impl (Impl (Var a) (Var b)) (Var b); Neg (Var a)] ->
      X = Var a \/ X = Neg (Var a) \/
      X = Impl (Impl (Var a) (Var b)) (Var b)).
  { intros X HX.
    simpl in HX.
    destruct HX as [HX | [HX | []]]; subst X.
    - right. right. reflexivity.
    - right. left. reflexivity. }
  (* The antecedent is refuted by the inversion lemma; the final
     step is a case analysis with zero cases on the resulting proof
     of False — the empty recursor of the inductive type False, not
     an ex falso axiom: the object calculus has no ⊥ and no such
     rule. *)
  destruct (proj1 (DNS1_inversion_lemma a b Hab _ _ _ HD eq_refl HS)
              eq_refl).
Qed.

(* Second corollary: Claim 1 holds of ℱ_𝐌 — the premiss of the
   DNS.1 instance is underivable there, unconditionally, for
   distinct atoms. *)

Theorem claim1_holds_in_ℱ_M :
  forall a b : nat,
    a <> b ->
    derivable minimal_F [Var a; Neg (Var a)] (Some (Var b)) ->
    False.
Proof.
  intros a b Hab HD.
  refine (proj1 (DNS1_inversion_lemma a b Hab _ _ _ HD eq_refl _)
            eq_refl).
  intros X HX.
  simpl in HX.
  destruct HX as [HX | [HX | []]]; subst X.
  - left. reflexivity.
  - right. left. reflexivity.
Qed.

(* Third corollary: the conclusion of the DNS.1 instance is
   underivable in ℱ_𝐌 as well. *)

Theorem DNS1_conclusion_underivable_in_ℱ_M :
  forall a b : nat,
    a <> b ->
    derivable minimal_F
      [Impl (Impl (Var a) (Var b)) (Var b); Neg (Var a)]
      (Some (Var b)) ->
    False.
Proof.
  intros a b Hab HD.
  refine (proj1 (DNS1_inversion_lemma a b Hab _ _ _ HD eq_refl _)
            eq_refl).
  intros X HX.
  simpl in HX.
  destruct HX as [HX | [HX | []]]; subst X.
  - right. right. reflexivity.
  - right. left. reflexivity.
Qed.
\end{verbatim}
\end{coqbox}

\subsection*{Block D --- Step 3: the refutation rule
$\overline{DNS.1}$ and the refutation system
(Section~\ref{sec:antidns})}

\begin{coqbox}
\begin{verbatim}
(* ══ Block D — Step 3: the refutation rule anti-DNS.1 and the
   refutation system ══ *)

(* Anti-DNS.1 holds in ℱ_𝐌 as the CONTRAPOSITIVE of the certified
   invertibility — Goranko's correctness discipline for refutation
   calculi: a refutation rule is licensed by the correctness of its
   converse. Anti-DNS.1 is therefore not a meta-rule imported into
   the kernel: it is a rule derivable from the kernel's own
   invertibility, dormant in the shadow of the system. *)

Theorem anti_DNS1_holds_in_ℱ_M :
  forall a b : nat,
    a <> b ->
    (derivable minimal_F [Var a; Neg (Var a)] (Some (Var b)) ->
     False) ->
    (derivable minimal_F
       [Impl (Impl (Var a) (Var b)) (Var b); Neg (Var a)]
       (Some (Var b)) ->
     False).
Proof.
  intros a b Hab Hprem HD.
  apply Hprem.
  apply (DNS1_invertible_at_decisive_instance_in_ℱ_M a b Hab).
  exact HD.
Qed.

(* The refutation system, in the sense of Łukasiewicz, Tiomkin
   (1988) and Goranko (Studia Logica 53, 1994): a deductive system
   for NON-provability, made of rejection axioms and refutation
   rules. Below, the smallest refutation system that the kernel
   licenses: Claim 1 as its only rejection axiom, anti-DNS.1 as its
   only refutation rule — the converse of the latter being the
   certified DNS1_invertible_at_decisive_instance_in_ℱ_M. *)

Inductive refutable : set formula -> option formula -> Prop :=

  | claim1_axiom :
      forall a b,
        a <> b ->
        refutable [Var a; Neg (Var a)] (Some (Var b))

  | anti_DNS1 :
      forall a b,
        refutable [Var a; Neg (Var a)] (Some (Var b)) ->
        refutable
          [Impl (Impl (Var a) (Var b)) (Var b); Neg (Var a)]
          (Some (Var b)).

(* Ł-correctness for ℱ_𝐌: everything the refutation system rejects
   is underivable in the minimal reading. Induction on the
   refutation derivation; both cases discharge through the
   inversion lemma. *)

Theorem refutation_system_Ł_correct_for_ℱ_M :
  forall G C,
    refutable G C ->
    derivable minimal_F G C ->
    False.
Proof.
  intros G C HR.
  induction HR; intro HD.
  - exact (claim1_holds_in_ℱ_M a b H HD).
  - inversion HR; subst;
    match goal with
    | Hab : a <> b |- _ =>
        exact (DNS1_conclusion_underivable_in_ℱ_M a b Hab HD)
    end.
Qed.
\end{verbatim}
\end{coqbox}

\subsection*{Block E --- Step 4: the contradiction
(Section~\ref{sec:contradiction})}

\begin{coqbox}
\begin{verbatim}
(* ══ Block E — Step 4: the contradiction ══ *)

(* Ł-incorrectness for ℱ_ℂ: the same refutation system rejects a
   sequent that the Core reading derives, the witness being produced
   by R→ℂ alone. A rejection assertion has inferential content only
   inside a refutation system; the smallest one available to Core's
   kernel rejects what Core proves. This is the certified form of
   the contradiction involved in asserting Claim 1. *)

Theorem refutation_system_Ł_incorrect_for_ℱ_ℂ :
  exists G C,
    refutable G C /\ derivable core_logic G C.
Proof.
  exists [Impl (Impl (Var 0) (Var 1)) (Var 1); Neg (Var 0)].
  exists (Some (Var 1)).
  split.
  - apply anti_DNS1.
    apply claim1_axiom.
    discriminate.
  - apply DNS2_instantiated.
    apply absurdity_core.
Qed.

(* The conditional collision theorem. False is derived from two
   NAMED HYPOTHESES and the proved lemmas of this file — no axiom
   is declared anywhere. The first hypothesis states Claim 1,
   restricted to distinct atoms (so that the axiom rule cannot
   trivialise it), as Tennant posits it. The second,
   anti_DNS1_rule_for_ℂ, states that the kernel's refutation rule
   anti-DNS.1 governs ℂ's rejection assertion at this single
   sequent — in Goranko's discipline, that the licence of the rule,
   the invertibility of DNS.1 certified above for the kernel,
   survives in ℂ's own reading. Nothing foreign to Core is involved,
   since every rule of ℱ_𝐌 is a rule of ℂ. *)

Theorem claim1_false :
  forall
    (Claim1_Tennant :
      forall a b : nat,
        a <> b ->
        derivable core_logic [Var a; Neg (Var a)] (Some (Var b)) ->
        False)

    (anti_DNS1_rule_for_ℂ :
      forall a b : nat,
        a <> b ->
        (derivable core_logic [Var a; Neg (Var a)] (Some (Var b)) ->
         False) ->
        (derivable core_logic
           [Impl (Impl (Var a) (Var b)) (Var b); Neg (Var a)]
           (Some (Var b)) ->
         False)),

    forall a b : nat,
      a <> b ->
      False.
Proof.
  intros Claim1_Tennant anti_DNS1_rule_for_ℂ a b Hab.
  apply (anti_DNS1_rule_for_ℂ a b Hab).
  - apply (Claim1_Tennant a b Hab).
  - apply DNS2_instantiated.
    apply absurdity_core.
Qed.

(* A closed instance at the concrete atoms 0 and 1 discharges the
   last quantifiers. *)

Corollary claim1_false_at_0_1 :
  forall
    (Claim1_Tennant :
      forall a b : nat,
        a <> b ->
        derivable core_logic [Var a; Neg (Var a)] (Some (Var b)) ->
        False)

    (anti_DNS1_rule_for_ℂ :
      forall a b : nat,
        a <> b ->
        (derivable core_logic [Var a; Neg (Var a)] (Some (Var b)) ->
         False) ->
        (derivable core_logic
           [Impl (Impl (Var a) (Var b)) (Var b); Neg (Var a)]
           (Some (Var b)) ->
         False)),

    False.
Proof.
  intros Claim1_Tennant anti_DNS1_rule_for_ℂ.
  apply (claim1_false Claim1_Tennant anti_DNS1_rule_for_ℂ 0 1).
  discriminate.
Qed.
\end{verbatim}
\end{coqbox}

\subsection*{Block F --- Status of the second commitment
(Section~\ref{sec:contradiction}, Reply)}

\begin{coqbox}
\begin{verbatim}
(* ══ Block F — Status of the second commitment ══ *)

(* The certification settles the status of anti_DNS1_rule_for_ℂ
   completely, on both sides. In the minimal reading it is a
   metatheorem, proved outright (anti_DNS1_holds_in_ℱ_M above). In
   the Core reading, the formalisation is charitable to Tennant —
   the fragment verifies Claim 1 itself, by structural inversion,
   not by failure of search: *)

Theorem claim1_holds_in_ℱ_ℂ :
  forall a b : nat,
    a <> b ->
    derivable core_logic [Var a; Neg (Var a)] (Some (Var b)) ->
    False.
Proof.
  intros a b Hab HD.
  inversion HD; subst; simpl in *;
    repeat (match goal with
            | Hyp : _ \/ _ |- _ => destruct Hyp as [Hyp | Hyp]
            | Hyp : False |- _ => destruct Hyp
            end);
    congruence.
Qed.

(* — and yet, DNS.2 being derivable through R→ℂ, the Core reading
   refutes the commitment: the converse of anti-DNS.1, the
   invertibility of DNS.1, does not survive the passage from ℱ_𝐌 to
   ℱ_ℂ. The refutation rule anti-DNS.1 is thereby Ł-incorrect for
   ℱ_ℂ — the rule-level form of the system-level verdict of
   Block E, and the exact refutation, inside the calculus, of the
   hypothesis anti_DNS1_rule_for_ℂ of the final theorem. (In
   Versions 4 and 5 of this file the same certified fact was named
   ℱ_ℂ_not_conservative_at_DNS1.) *)

Theorem anti_DNS1_Ł_incorrect_for_ℱ_ℂ :
  forall a b : nat,
    a <> b ->
    ~ ( (derivable core_logic [Var a; Neg (Var a)] (Some (Var b)) ->
         False) ->
        (derivable core_logic
           [Impl (Impl (Var a) (Var b)) (Var b); Neg (Var a)]
           (Some (Var b)) ->
         False) ).
Proof.
  intros a b Hab H.
  apply (H (claim1_holds_in_ℱ_ℂ a b Hab)).
  apply DNS2_instantiated.
  apply absurdity_core.
Qed.
\end{verbatim}
\end{coqbox}

\subsection*{Verification}

\begin{coqbox}
\begin{verbatim}
(* ══ Verification ══ *)

Print Assumptions DNS1_in_ℱ.
Print Assumptions DNS2_instantiated.
Print Assumptions DNS1_invertible_at_decisive_instance_in_ℱ_M.
Print Assumptions claim1_holds_in_ℱ_M.
Print Assumptions DNS1_conclusion_underivable_in_ℱ_M.
Print Assumptions anti_DNS1_holds_in_ℱ_M.
Print Assumptions refutation_system_Ł_correct_for_ℱ_M.
Print Assumptions refutation_system_Ł_incorrect_for_ℱ_ℂ.
Print Assumptions claim1_false.
Print Assumptions claim1_false_at_0_1.
Print Assumptions claim1_holds_in_ℱ_ℂ.
Print Assumptions anti_DNS1_Ł_incorrect_for_ℱ_ℂ.
\end{verbatim}
\end{coqbox}

\noindent Expected output: \texttt{Closed under the global
context}, twelve times.

\section{Checking this proof online}\label{secA1}
All the verification material is gathered in the repository
\url{https://github.com/joseph-vidal-rosset/core_logic_is_not_paraconsistent}.

The Coq script \texttt{core\_logic\_is\_not\_paraconsistent.v},
reproduced in Appendix~\ref{app:coq}, can be checked in any web
browser via the official jsCoq playground at
\url{https://jscoq.github.io/scratchpad.html}: paste the content of
the file into the editor and step through the proof.

The Lean~4 script proves the same theorems, with
\texttt{\#print axioms} reporting \texttt{[propext]} only, and can
likewise be checked in the browser at
\url{https://live.lean-lang.org}. The repository provides the
trusted \texttt{Challenge}/\texttt{Solution} pair for the Lean
\emph{Comparator}, configured for the certified statements;
following the procedure of the Lean reference manual, the exported
proof terms are replayed through two independently implemented
kernels, Lean's own and \texttt{nanoda}, both of which accept them,
and the same validation can be reproduced in one click on
\emph{Comparator Live}. The challenge file --- the statements
alone, their proofs left as \texttt{sorry} --- is the entire
trusted base of that check; its SHA256 is printed in the
repository's README.

At \url{https://vidal-rosset.net/2026-05-01-core-logic-is-not-paraconsistent.html}
the reader will find one-click links to all these checks, as well
as an Athena development, which certifies the earlier presentation
of the argument and is being updated to the present version.
\end{appendices}
\end{document}